\newcolumntype{C}{>{\centering\arraybackslash}X}
\newcolumntype{D}{>{\centering\arraybackslash}X}
\newtheorem{theorem}{Theorem}
\newtheorem{lemma}[theorem]{Lemma}
\newtheorem{proposition}[theorem]{Proposition}
\newtheorem{question}[theorem]{Question}
\newtheorem*{claim*}{Claim}
\theoremstyle{definition}
\newtheorem{construction}[theorem]{Construction}
\newcommand{\cE}{\ensuremath{\mathcal{E}}}
\newcommand{\cG}{\ensuremath{\mathcal{G}}}
\newcommand{\cH}{\ensuremath{\mathcal{H}}}
\newcommand{\cV}{\ensuremath{\mathcal{V}}}
\newcommand{\N}{\ensuremath{\mathbb{N}}}
\newcommand{\footremember}[2]{%
  \footnote{#2}
  \newcounter{#1}
  \setcounter{#1}{\value{footnote}}%
}
\newcommand{\footrecall}[1]{%
  \footnotemark[\value{#1}]%
}
\begin{document}

\title{A note on asymmetric hypergraphs}

\author{Dominik Bohnert \footremember{grant}{Karlsruhe Institute of Technology, Karlsruhe, Germany.}
\and Christian Winter \footrecall{grant}\ \footnote{E-mail: \textit{christian.winter@kit.edu}} }

\maketitle

\begin{abstract}
A $k$-graph $\cG$ is asymmetric if there does not exist an automorphism on $\cG$ other than the identity, and $\cG$ is called minimal asymmetric if it is asymmetric but every non-trivial induced sub-hypergraph of $\cG$ is non-asymmetric.
Extending a result of Jiang and Ne\v set\v ril~\cite{JN}, we show that for every $k$-graph, $k\ge3$, there exist infinitely many minimal asymmetric $k$-graphs which have maximum degree $2$ and are linear. Further, we show that there are infinitely many $2$-regular asymmetric $k$-graphs for $k\ge3$.
\end{abstract}

\section{Introduction}

For $k\ge 2$, a \textit{$k$-uniform hypergraph}, or \textit{$k$-graph}, is a pair $\cG=(\cV(\cG),\cE(\cG))$ such that the edge set $\cE(\cG)$ consists of $k$-element subsets of the vertex set $\cV(\cG)$. Note that $2$-graphs are commonly known as graphs.
An \textit{automorphism} on a $k$-graph $\cG=(\cV,\cE)$ is a bijection $\phi\colon \cV\to \cV$ such that for every $E\in\cE$, $\{\phi(v):~ v\in E\}\in\cE$.
An automorphism which is not the identity is called \textit{non-trivial}.
We say that a $k$-graph $\cG$ is \textit{symmetric} if there exists a non-trivial automorphism on $\cG$ and \textit{asymmetric} otherwise.
$\cG$ is \textit{minimal asymmetric} if it is asymmetric and every induced sub-hypergraph $\cH$ of $\cG$ with $2\le |\cV(\cH)|<|\cV(G)|$ is symmetric. 
\\

Asymmetry of graphs was first considered by Frucht \cite{Frucht49} in 1949. It was famously observed by Erd\H{o}s and R\'enyi \cite{ER} that almost all graphs are asymmetric.
In 1988, Ne\v set\v ril conjectured that the number of minimal asymmetric graphs is finite, see \cite{BRSSTW}. 
After several partial results \cite{Nesetril,  Sabidussi, NS}, this conjecture was recently confirmed by Schweitzer and Schweitzer \cite{SS} who showed that there are exactly $18$ minimal asymmetric graphs.
In the hypergraph setting, Ellingham and Schroeder \cite{ES} studied a related variant of asymmetry. The notion of asymmetry used in this paper was introduced by Jiang and Ne\v set\v ril \cite{JN}, who showed that the natural generalization of Ne\v set\v ril's conjecture to $k$-graphs does not hold.

\begin{theorem}[Jiang, Ne\v set\v ril \cite{JN}]\label{thm:JN}
Let $k\ge 3$ be a positive integer. Then there exist infinitely many minimal asymmetric $k$-graphs.
\end{theorem}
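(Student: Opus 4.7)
The plan is to exhibit, for each sufficiently large $n$, an explicit $k$-graph $\cG_n$ on $n$ vertices that is minimal asymmetric; since the orders $|\cV(\cG_n)|$ would be pairwise distinct, the resulting family contains infinitely many non-isomorphic members. Following the usual pattern of rigidity constructions, I would build $\cG_n$ as a highly symmetric ``base'' (a natural candidate being a $k$-uniform linear path or cycle of length growing with $n$) decorated with a small rigid ``gadget'' attached at one carefully chosen location. Exploiting $k\ge 3$ is essential here: the extra structure needed to destroy automorphisms can be hidden inside existing edges rather than by adding new ones, which is precisely what makes the theorem fail for $k=2$ but succeed for $k\ge 3$.

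To show that $\cG_n$ is asymmetric, I would introduce vertex invariants that are preserved under every automorphism — for instance the hypergraph-distance from the gadget, the multiset of edge-degrees in the neighbourhood of a vertex, or the sizes of the various ``branches'' hanging off it. A careful choice of the gadget should make these invariants pairwise distinct on $\cV(\cG_n)$, forcing every automorphism to fix every vertex and therefore to be the identity.

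The bulk of the work is the minimality claim: for every vertex $v$, the induced sub-hypergraph $\cG_n-v$ admits a non-trivial automorphism (and similarly for all smaller induced sub-hypergraphs). I would case-split on the location of $v$. If $v$ lies inside the gadget, then $\cG_n-v$ essentially reduces to the base, which is symmetric by design. If $v$ lies in the generic part of the base far from the gadget, deletion of $v$ should open up a local reflection that extends to a global automorphism because the gadget happens to sit on the newly exposed symmetry axis. Induced sub-hypergraphs obtained by removing more than one vertex reduce to these cases by an inductive argument, since removing an additional vertex from an already symmetric hypergraph typically preserves at least one of its non-trivial automorphisms.

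The main obstacle is the design of the gadget. It must be simultaneously \emph{rigid enough} to destroy every automorphism of the full $\cG_n$ and \emph{fragile enough} that every single vertex deletion restores some non-trivial symmetry. This is a delicate balance: the gadget has to be positioned on the essentially unique symmetry-breaking location of the base, and its internal structure must be tuned so that deleting any of its own vertices still produces a symmetric remainder. Getting this balance right, together with the case analysis described above, is the heart of the argument.
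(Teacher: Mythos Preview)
Your proposal is a plan rather than a proof: no construction is written down, the ``gadget'' is left unspecified, and the vertex invariants are only named in the abstract. More seriously, the minimality argument would fail even with a construction in hand. Minimality requires \emph{every} induced sub-hypergraph on between $2$ and $n-1$ vertices to be symmetric, not just those obtained by a single vertex deletion. Your reduction of the general case to single deletions rests on the claim that ``removing an additional vertex from an already symmetric hypergraph typically preserves at least one of its non-trivial automorphisms.'' This is simply false: a symmetric hypergraph may have a unique non-trivial automorphism $\sigma$, and deleting any vertex moved by $\sigma$ destroys it. There is no induction available here; symmetry of \emph{all} proper induced sub-hypergraphs must be established directly, and this requirement drives the design of the construction rather than being an afterthought. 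Likewise, the assertion that deleting a generic base vertex ``opens up a local reflection'' on whose axis the gadget happens to sit cannot hold for \emph{every} vertex unless the base and gadget are chosen with far more care than a linear path or cycle affords.

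For comparison, the argument of Jiang and Ne\v set\v ril (sketched in this paper) starts from the explicit $k$-graph $\cG_{k,t}$ of Construction~\ref{constr:JN}, whose automorphisms act dihedrally on the cyclically arranged L-edges (Lemma~\ref{lem:basicproperties}). The key structural fact, proved separately and with real work, is that \emph{every} proper induced sub-hypergraph of $\cG_{k,t}$ on at least two vertices is already symmetric (Lemma~\ref{lem:subgraph}); only then is a single edge added to kill the global dihedral symmetry. Minimality is thus built into the base from the outset and verified uniformly, not recovered after the fact by a case split on which vertex was removed.
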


They provide an explicit construction in their proof where each $k$-graph has maximum degree $3$.
In this paper, we extend their result by requiring even stricter structural properties.
In a $k$-graph $\cG=(\cV,\cE)$ for any two distinct vertices $u,v\in\cV$ the \textit{codegree} of $u$ and $v$ is the number of edges in $\cE$ which contain both $u$ and $v$. The \textit{maximum codegree} of $\cG$ is the maximum over the codegrees of all vertex pairs $u,v\in\cV$, $u\neq v$.
A $k$-graph $\cG$ with maximum codegree $1$ is called \textit{linear}. Equivalently, $\cG$ is linear if any two edges $E_1,E_2\in\cE$ intersect in at most one vertex. Here we prove the following strengthening of Theorem \ref{thm:JN}.

\begin{theorem}\label{thm:main}
Let $k\ge 3$ be a positive integer. There exist infinitely many minimal asymmetric $k$-graphs which have maximum degree $2$ and maximum codegree $1$.
\end{theorem}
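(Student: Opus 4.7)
The strategy is to exploit a correspondence between linear $k$-graphs of maximum degree $2$ and ordinary graphs with vertex degrees in $\{k-1,k\}$, and then produce an infinite family of asymmetric graphs on that side whose induced-substructure behaviour translates into minimal asymmetry of the corresponding hypergraphs.

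First, let $\cG$ be any linear $k$-graph with maximum degree $2$, and let $H=H(\cG)$ be the intersection graph of $\cG$, with vertex set $\cE(\cG)$ and two hyperedges adjacent iff they share a (unique, by linearity) vertex. Every hyperedge $E$ then contributes $k-\deg_H(E)$ vertices of degree $1$ in $\cG$, which we call \emph{decorations}. If some hyperedge has two or more decorations, transposing them is a non-trivial automorphism of $\cG$, so asymmetry of $\cG$ forces $\deg_H(E)\ge k-1$ for every $E$; that is, all degrees of $H$ lie in $\{k-1,k\}$. Conversely, from any such graph $H$ one recovers $\cG$ uniquely, and the at-most-one decoration per hyperedge ensures that automorphisms lift uniquely; hence $\cG$ is asymmetric iff $H$ is asymmetric.

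Next, for each sufficiently large $n$ I would construct an asymmetric graph $H_n$ whose degrees lie in $\{k-1,k\}$ by taking a long ``spine'' providing the parameter $n$, decorated with a small asymmetric ``marker gadget'' to break residual symmetries. For $k=3$ the spine is a long cycle (all vertices degree $2$) and the marker is a few carefully placed chords producing degree-$3$ vertices at positions that admit no non-trivial automorphism of the whole graph. For $k\ge 4$, extra incidences are added uniformly so that every vertex of $H_n$ has degree $k-1$ or $k$ while preserving the asymmetry. Translating back, the $k$-graph $\cG_n$ corresponding to $H_n$ is linear, has maximum degree $2$, and by the correspondence is asymmetric.

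The remaining, most delicate task is to establish minimal asymmetry of $\cG_n$, namely that every proper induced sub-hypergraph is symmetric. I would proceed by case analysis on the removed vertex. Removing a degree-$1$ vertex destroys exactly one hyperedge, while removing a degree-$2$ vertex destroys two. In either case the induced sub-hypergraph picks up several newly isolated or newly-decorated vertices, and the typical outcome is that two ``equivalent'' vertices appear whose transposition gives a non-trivial automorphism, or else the destroyed hyperedges cut the spine into two path-like pieces that are related by the natural reversal symmetry. The main obstacle, and where the bulk of the work lies, is handling vertex removals deep in the spine and far from the marker gadget: no isolated vertices appear, and one must argue that the two resulting sub-paths fit together into a self-reversing automorphism. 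Ensuring this for every interior cut forces a very rigid design of the marker gadget, and it is this rigidity that essentially dictates the construction; iterating over $n$ then yields the desired infinite family.
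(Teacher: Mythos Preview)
Your intersection-graph reduction is correct and is a genuinely different angle from the paper's: for a linear $k$-graph $\cG$ of maximum degree $2$ in which no hyperedge contains two degree-$1$ vertices, asymmetry of $\cG$ is indeed equivalent to asymmetry of the ordinary edge-intersection graph $H(\cG)$, whose degrees all lie in $\{k-1,k\}$. The paper never uses this correspondence; it works directly with explicit $k$-graphs assembled from the Jiang--Ne\v set\v ril gadgets $\cG_{k,t}$, gluing $k-1$ copies of different sizes along a single new edge $E_0$ (with separate ad hoc constructions for $k\in\{3,5\}$) and then verifying asymmetry and minimal asymmetry via structural lemmas about $\cG_{k,t}$.

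That said, what you have written is a plan, not a proof, and it has two genuine gaps.

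\textbf{The construction is not given.} For $k=3$ you describe ``a long cycle with a few carefully placed chords''; for $k\ge 4$ only that ``extra incidences are added uniformly'' to push all degrees into $\{k-1,k\}$ ``while preserving the asymmetry''. Producing, for each $k$, infinitely many asymmetric graphs with degrees in $\{k-1,k\}$ is easy in isolation, but here the graph must be chosen so that the associated hypergraph is \emph{minimal} asymmetric, and you yourself note that this ``forces a very rigid design of the marker gadget'' --- a design you do not actually supply or verify.

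\textbf{Minimal asymmetry is not established, even in outline.} The definition requires \emph{every} induced sub-hypergraph $\cH'$ with $2\le|\cV(\cH')|<|\cV(\cH)|$ to be symmetric, not only those obtained by deleting a single vertex. Your case analysis is explicitly on ``the removed vertex'' (degree $1$ versus degree $2$); this addresses only maximal proper sub-hypergraphs, and there is no reduction from the general case to that one. Even within that restricted setting the argument is not completed: you describe the ``typical outcome'', isolate the hard case of a cut far from the marker, and stop. By contrast, the paper fixes an \emph{arbitrary} proper induced sub-hypergraph $\cH'$ and in every case exhibits an explicit non-trivial automorphism, relying on a prepared lemma that any proper induced sub-hypergraph of $\cG_{k,t}$ containing $E_1$ admits a non-trivial automorphism fixing both $E_1$ and $u_1$. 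That lemma is precisely what makes arbitrary vertex deletions tractable, and your framework currently has no analogue of it.
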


Note that every $k$-graph with maximum degree $1$ or maximum codegree $0$ is symmetric, so our result is best possible in this sense.
In our construction for Theorem \ref{thm:main}, most vertices have degree $2$, but crucially some vertices have degree $1$.
This raises the question whether there exist (minimal) asymmetric $k$-graphs where every vertex has the same degree.
We say that a $k$-graph $\cG$ is \textit{$r$-regular} if every vertex has degree $r$.

\begin{theorem}\label{thm:regular}
There are infinitely many $2$-regular, asymmetric $k$-graphs for every $k\ge3$.
\end{theorem}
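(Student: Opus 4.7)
The plan is to reduce Theorem~\ref{thm:regular} to the classical fact that for every $k\ge 3$ there exist infinitely many asymmetric $k$-regular simple graphs, via a duality between linear $2$-regular $k$-graphs and $k$-regular simple graphs. Given a linear $2$-regular $k$-graph $\cG$, I would associate to it the \emph{line graph} $L(\cG)$ with vertex set $\cE(\cG)$ and an edge $\{E_1,E_2\}$ whenever $E_1\cap E_2\neq\emptyset$. Linearity of $\cG$ forces $|E_1\cap E_2|\le 1$, so $L(\cG)$ is simple; $2$-regularity of $\cG$ means each $v\in\cV(\cG)$ lies in exactly two hyperedges and thus contributes exactly one edge to $L(\cG)$; and $k$-uniformity makes every $E\in\cV(L(\cG))$ have degree $|E|=k$. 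Hence $L(\cG)$ is a $k$-regular simple graph. Conversely, any $k$-regular simple graph $H$ arises in this way from the hypergraph on vertex set $\cE(H)$ whose edges are the stars $\{e\in\cE(H):u\in e\}$ for $u\in\cV(H)$, and these two constructions are mutually inverse up to isomorphism.

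The central step is to verify that this duality preserves automorphism groups. An automorphism $\phi$ of $\cG$ induces the bijection $E\mapsto\phi(E)$ on $\cV(L(\cG))=\cE(\cG)$, which preserves intersections and hence is an automorphism of $L(\cG)$. Conversely, an automorphism $\psi$ of $L(\cG)$ sends each edge $\{E_1,E_2\}$ of $L(\cG)$, canonically identified with the unique $v\in E_1\cap E_2\subseteq\cV(\cG)$, to the edge $\{\psi(E_1),\psi(E_2)\}$ identified with some vertex $\phi(v)\in\cV(\cG)$; a short check confirms that the resulting map $\phi$ is an automorphism of $\cG$ and that the two assignments are mutually inverse, so $\mathrm{Aut}(\cG)\cong\mathrm{Aut}(L(\cG))$. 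In particular, $\cG$ is asymmetric if and only if $L(\cG)$ is.

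With the duality in hand, it suffices to exhibit infinitely many pairwise non-isomorphic asymmetric $k$-regular simple graphs for every $k\ge 3$. For $k=3$ this goes back to Frucht's $12$-vertex cubic asymmetric graph, and more generally one may use that almost all $k$-regular graphs on $n$ vertices are asymmetric as $n\to\infty$, so asymmetric $k$-regular graphs exist on arbitrarily large vertex sets. Applying the inverse construction to any such family yields infinitely many non-isomorphic linear $2$-regular asymmetric $k$-graphs, which establishes Theorem~\ref{thm:regular}. The main obstacle in this plan is the bookkeeping for the automorphism correspondence: while conceptually transparent, one must carefully argue that an automorphism of the line graph can be lifted to a permutation of $\cV(\cG)$ that preserves $\cE(\cG)$; everything else reduces to a well-known fact about random or constructed regular graphs.
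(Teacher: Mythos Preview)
Your approach is essentially the same as the paper's: both exploit the duality between $k$-regular simple graphs and $2$-regular $k$-graphs (the paper calls your inverse construction the \emph{hypergraph dual}), show that asymmetry is preserved under this correspondence, and then invoke the existence of infinitely many asymmetric $k$-regular graphs (the paper cites Izbicki for this, whereas you appeal to Frucht and the probabilistic result). The only cosmetic differences are that you phrase the correspondence via the line graph and record the bonus observation that the resulting $k$-graphs are linear.
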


It remains open if this result extends to minimal asymmetric $k$-graphs. We raise the following unanswered question.

\begin{question}
For $k\ge 3$ and $r\ge2$, is there an $r$-regular, minimal asymmetric $k$-graph?
\end{question}

Note that this question can be answered negatively for $k=2$ and arbitrary $r$: None of the $18$ minimal asymmetric graphs characterized by Schweitzer and Schweitzer \cite{SS} is regular.
\\

In this paper we use standard graph theoretic notions, for formal definitions we refer the reader to Diestel \cite{Diestel}.
We denote by $[n]$ the set of the first $n$ integers $\{1,\dots,n\}$. For consistency, let $[0]=\varnothing$.
Given a function $\phi:\cV\to\cV$ and a subset $W\subseteq \cV$, we denote image of $W$ by $\phi(W):=\{\phi(v):~ v\in W\}$.

The organization of this paper is as follows. In Section \ref{sec:JN} we present the constructions needed for the proof of Theorem \ref{thm:main}, in Sections \ref{sec:properties} and \ref{sec:connectivity} we show some properties of these constructions.
Subsequently, in Section \ref{sec:proof} we prove Theorem \ref{thm:main} and in Section \ref{sec:regular} we give a proof of Theorem \ref{thm:regular}.

\section{Sparse minimal asymmetric $k$-graphs}\label{sec:sparse}

\subsection{Constructions}\label{sec:JN}

The following construction was given by Jiang and Ne\v set\v ril \cite{JN}. Throughout the section, all indexes in $[tk]$ are considered modulo $tk$.

\begin{construction}[Jiang, Ne\v set\v ril \cite{JN}]\label{constr:JN}
Let $k\ge 3$ and $t\ge 2$. Let $\cG_{k,t}$ be the $k$-graph with vertices
$$\cV(\cG_{k,t})=\big\{u_i : ~ i\in[tk]\big\} \cup \big\{v_i : ~ i\in[tk]\big\} \cup \big\{w_{i,j} : ~ i\in[tk], j\in[k-3]\big\}$$
and edges $\cE(\cG_{k,t})=\cE_L\cup\cE_{cyc}$. 
Here $\cE_L=\big\{ E_i : ~ i\in[tk] \big\}$ is the set of \textit{L-edges}
$$E_i=\{v_i,u_i,v_{i+1},w_{i,1},\dots, w_{i,k-3}\}.$$
Furthermore, the set of \textit{cyclic edges} is $\cE_{cyc}=\big\{E_{i,j}: ~ j\in[k-3], i=j+sk, s\in[t] \big\} $ where
$$E_{i,j}=\{w_{i,j},\dots,w_{i+k-1,j}\}.$$
An illustration of this construction is given in Figure \ref{fig:JNgraph}.
\end{construction}

\begin{figure}[h]
\centering
\includegraphics[scale=0.50]{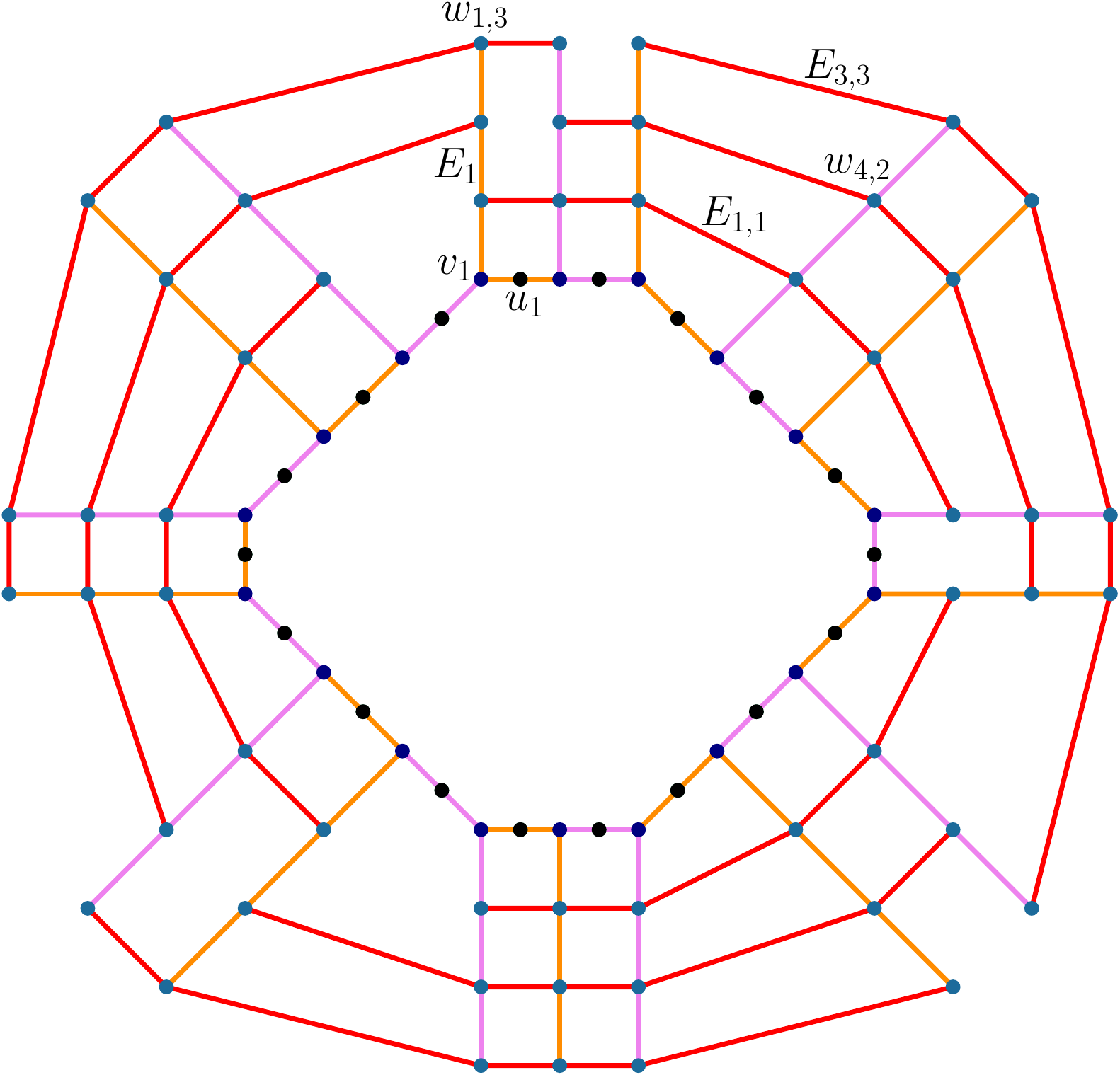}
\caption{The $3$-graph $\cG_{6,3}$}
\label{fig:JNgraph}
\end{figure}

Jiang and Ne\v set\v ril \cite{JN} proved Theorem \ref{thm:JN} by adding a single edge to $\cG_{k,t}$.
In this paper, we extend Construction \ref{constr:JN} as follows.

\begin{construction}\label{constr:H}
Let $k\ge 3$ and let $t_i\in \N$ for $i\in[k-1]$ such that $2\le t_1<t_2<\dots<t_{k-1}$.
We denote by $\cG^\ell=(\cV^\ell,\cE^\ell)$ a copy of $\cG_{k,t_\ell}$ as introduced in Construction  \ref{constr:JN}, such that the vertex sets $\cV^\ell$ are pairwise disjoint. For every $\ell\in[k-1]$, we write $u^\ell_i$ when referring to the vertex of $\cG^\ell$ corresponding to $u_i$ in $\cG_{k,t_\ell}$ and similarly for $v^\ell_i, w^\ell_{i,j}, E^\ell_i$ and $E^\ell_{i,j}$. 

Now let $x_0$ be an additional vertex which is not contained in any $\cV^\ell$, $\ell\in[k-1]$.
We define $\cH(t_1,\dots,t_{k-1})=(\cV,\cE)$ such that
$$\cV=\cV^1\cup\dots\cup\cV^{k-1}\cup\{x_0\}\quad \text{ and }\quad \cE=\cE^1\cup\dots\cup \cE^{k-1}\cup\{E_0\},$$
where $E_0=\{x_0, u^1_1,u^2_1,\dots,u^{k-1}_1\}$. See Figure \ref{fig:Hgraph} for an illustration of $\cH(t_1,\dots,t_{k-1})$.
\\

\begin{figure}[h]
\centering
\includegraphics[scale=0.65]{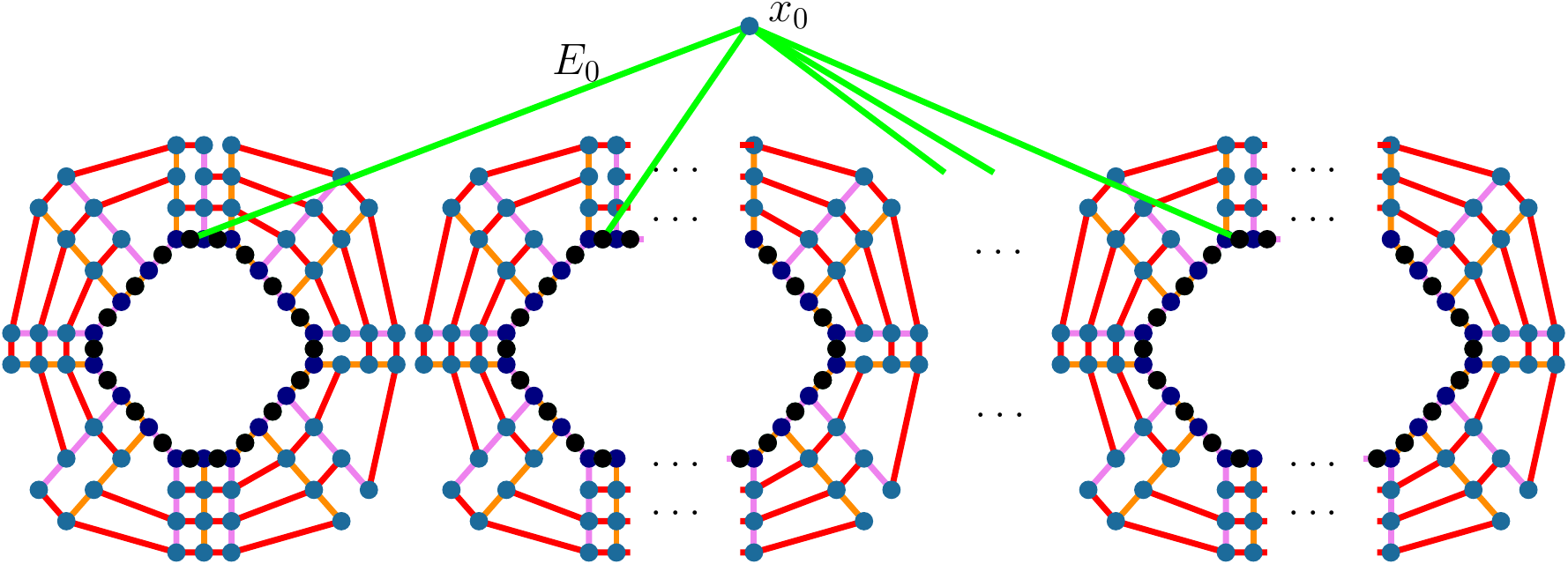}
\caption{The $k$-graph $\cH(t_1,\dots,t_{k-1})$}
\label{fig:Hgraph}
\end{figure}
\end{construction}

The $k$-graph $\cH(t_1,\dots,t_{k-1})$ is non-asymmetric if $k=3$ or $k=5$, because Lemma \ref{lem:invariant} does not hold for such $k$, see also Figure \ref{fig:counterex5}. Therefore, we provide two additional constructions covering those cases.

\begin{construction}\label{constr:H3}
Let $k\in\{3,5\}$ and $2\le t< t'$. Let $\cG$ and $\cG'$ be vertex-disjoint copies of $\cG_{k,t}$ and $\cG_{k,t'}$, respectively. 
We denote by $u'_i$ the vertex corresponding to $u_i$ in $\cG_{k,t'}$ and similarly for $v'_i, w'_{i,j}, E'_i$ and $E'_{i,j}$.
For the vertices in $\cG$ we use the same labels as defined for $\cG_{k,t}$, e.g.\ $u_i$ refers to the vertex in $\cG$ corresponding to $u_i$ in $\cG_{k,t}$.
Let $x_0,y$ and $y'$ be three distinct vertices, disjoint from $\cV(\cG)\cup\cV(\cG')$.
\\

\noindent For $k=3$, let $E_0=\{x_0,u_1,u'_1\}$, $E_y=\{y, u_2,u_3\}$ and $E'_y=\{y',u'_2,u'_3\}$.
We define the $3$-graph
$$\cH^3(t,t')=\big( \cV(\cG)\cup\cV(\cG')\cup\{x_0,y,y'\},\cE(\cG)\cup\cE(\cG')\cup\{E_0,E_y,E'_y\}\big).$$
For $k=5$, let $E_0=\{x_0,u_1,u_2,u'_1,u'_2\}$ and define the $5$-graph
$$\cH^5(t,t')=\big( \cV(\cG)\cup\cV(\cG')\cup\{x_0\},\cE(\cG)\cup\cE(\cG')\cup\{E_0\}\big).$$
Both constructions are illustrated in Figure \ref{fig:H5}.

\begin{figure}[h]
\centering
\includegraphics[scale=0.65]{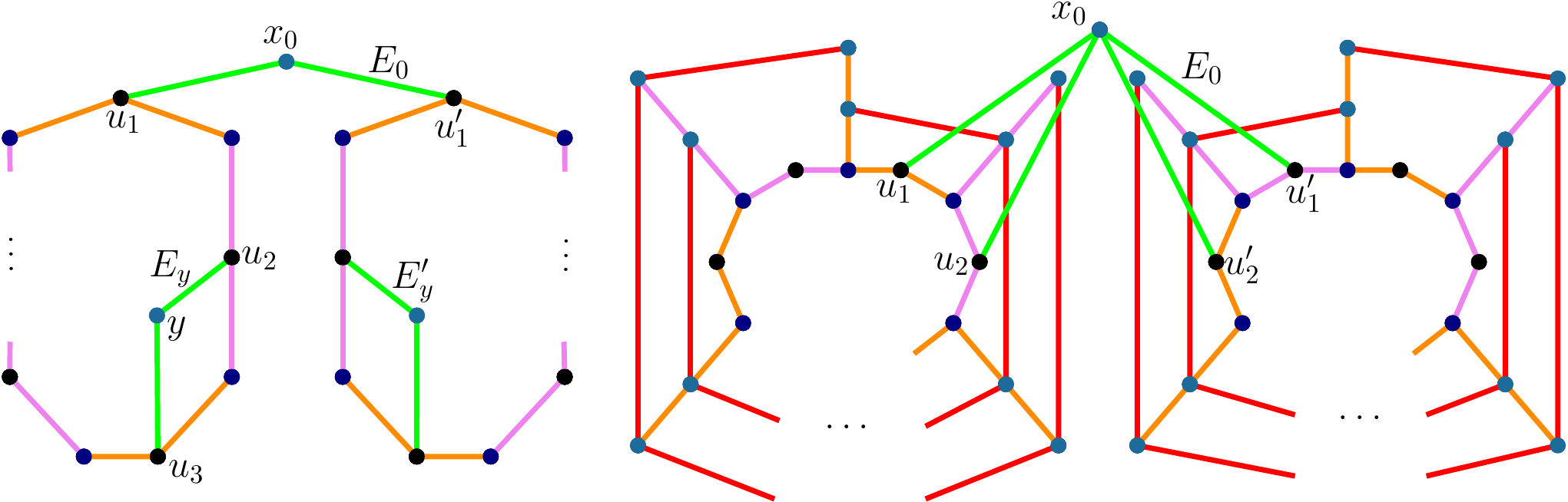}
\caption{The hypergraphs $\cH^3(t,t')$ and $\cH^5(t,t')$}
\label{fig:H5}
\end{figure}

\end{construction}

\subsection{Properties of $\cG_{k,t}$}\label{sec:properties}

First we state slight reformulations of two properties shown by Jiang and Ne\v set\v ril \cite{JN}.

\begin{lemma}[Jiang, Ne\v set\v ril \cite{JN}]\label{lem:subgraph}
Let $k\ge 3$ and $t\ge 2$. Let $\cG'$ be an induced sub-hypergraph of $\cG_{k,t}$ on at least two vertices.
\begin{enumerate}
\item[(i)] There is a non-trivial automorphism on $\cG'$, i.e.\ $\cG'$ is symmetric.
\item[(ii)] If $E_1\in \cE(\cG')$ and $|\cV(\cG')|<|\cV(\cG_{k,t})|$, then there  is a non-trivial automorphism $\phi$ on $\cG'$ such that $\phi(E_1)=E_1$ and $\phi(u_1)=u_1$.
\end{enumerate}
\end{lemma}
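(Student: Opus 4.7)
The plan is to exploit the high symmetry of $\cG_{k,t}$ together with the extra flexibility granted by passing to a proper induced sub-hypergraph. First, I would identify the natural automorphisms of the whole $k$-graph: the cyclic shift $\sigma\colon i\mapsto i+k$ acts as a rotation on the $v$-cycle while preserving every layer, giving a $\mathbb{Z}/t$-action, and composing with a suitable reversal yields reflections. Reflections exist only for specific centres, because the cyclic edge $E_{i,j}$ is rigidly tied to the condition $i\equiv j\pmod k$; this coupling also prevents non-trivial layer permutations from extending to automorphisms of the whole $\cG_{k,t}$. Between the shift, these reflections, and local swaps of degree-one vertices in $\cG'$, I expect to have enough symmetries to establish both parts.

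For part (i), my plan is a case analysis on the induced sub-hypergraph $\cG'$. If $\cG'$ contains at most one complete edge of $\cG_{k,t}$, then either $\cG'$ has two isolated vertices (which we transpose) or $\cG'$ is contained in a single $k$-edge (inside which any transposition of two vertices is an automorphism of $\cG'$). If $\cG'$ contains at least two edges, I would first test whether $\cV(\cG')$ is invariant under some non-trivial power of $\sigma$; in that case the restriction of the shift is the required non-trivial automorphism. Otherwise $\cG'$ must be missing enough vertices that the $v$-cycle is broken into paths, or some cyclic edge is broken into a chain, and the reflection of such a broken component (with the identity on the rest) gives the desired automorphism.

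For part (ii), the hypotheses $E_1\in\cE(\cG')$ and $|\cV(\cG')|<|\cV(\cG_{k,t})|$ provide a missing vertex $x\notin\cV(\cG')$, and the plan is to manufacture the desired automorphism far from $E_1$ so that $E_1$ and $u_1$ remain untouched. If $x=v_p$, then the $v$-cycle breaks into a path and I would reflect the subpath lying on the opposite side of $E_1$ from $v_1,v_2$, extending by the identity on $E_1$; since $u_1$ is the unique vertex of $E_1$ not incident to any other edge of $\cG_{k,t}$, it is automatically fixed. If $x=w_{p,j}$, then the cyclic edge through $w_{p,j}$ is broken in $\cG'$ and admits an internal swap of two of its remaining $w$-vertices (matched by a swap of their incident L-edges), yielding an automorphism of $\cG'$ which fixes $E_1$ setwise and $u_1$ pointwise. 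The case $x=u_p$ with $p\ne 1$ is handled by swapping $u_p$ with any other degree-zero or degree-one vertex that sits in a symmetric position relative to $E_1$.

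The main obstacle is part (ii) in the degenerate situation where the unique missing vertex lies in or immediately adjacent to $E_1$, since then the reflection or swap has to be localised and must carefully avoid moving $u_1$. I expect this boundary case to require a dedicated subargument, splitting according to whether the missing vertex is a $v$-vertex close to $v_1$ or $v_2$, a $w$-vertex in the same cyclic edge as some $w_{1,j}$, or a pendant $u$-vertex on an L-edge adjacent to $E_1$ along the $v$-cycle.
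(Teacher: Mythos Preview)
The paper does not actually prove this lemma: it is attributed to Jiang and Ne\v{s}et\v{r}il, with the remark that (i) is a special case of Lemma~4(2) of \cite{JN} and (ii) follows from the proof of Lemma~3(3) of \cite{JN}. So there is no in-paper argument to compare yours against.

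Evaluated on its own, your plan has a real gap in the construction of the non-trivial automorphisms. Your central moves---``reflect a broken component and extend by the identity on the rest'' in part~(i), and ``swap two remaining $w$-vertices of a broken cyclic edge, matched by a swap of their incident L-edges'' in part~(ii)---are not in general automorphisms of $\cG'$. Take the second move: swapping $w_{a,j}\leftrightarrow w_{b,j}$ forces you to swap the L-edges $E_a\leftrightarrow E_b$, hence $v_a\leftrightarrow v_b$, $v_{a+1}\leftrightarrow v_{b+1}$, $u_a\leftrightarrow u_b$, and $w_{a,j'}\leftrightarrow w_{b,j'}$ for every $j'$. That in turn disturbs the neighbouring L-edges $E_{a-1},E_{a+1},E_{b-1},E_{b+1}$ (which share $v$-vertices with $E_a,E_b$) and every other cyclic edge through the $w_{a,j'}$ and $w_{b,j'}$, unless all of those are \emph{also} absent from $\cG'$. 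Nothing in your hypothesis guarantees this: a single missing $w_{p,j}$ kills one cyclic edge and one L-edge, not the whole neighbourhood. The same cascading problem afflicts the ``reflect a broken path'' move for $x=v_p$: reflecting a segment of the $v$-cycle forces a matching reflection on the attached $u$- and $w$-vertices, and then the cyclic edges (whose positions are tied to the congruence $i\equiv j\pmod k$) are not preserved.

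What is missing is the mechanism that actually drives the argument in \cite{JN}: removing any single vertex deletes at least one L-edge $E_p$, and this produces vertices of degree $\le 1$ in $\cG'$ (for instance $u_p$ becomes isolated if present, and $v_p$, $v_{p+1}$ drop to degree $\le 1$). The required non-trivial automorphism is then a transposition of two such low-degree vertices that lie in exactly the same set of surviving edges---typically both isolated, or both pendant on a common edge---so that no cascade is triggered. Your plan never isolates this local observation and instead reaches for global reflections that the rigidity of the cyclic-edge labelling does not support.
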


A stronger version of Lemma \ref{lem:subgraph}(i) is given in Lemma 4(2) of \cite{JN}, where \textit{weak} sub-hypergraphs are considered. 
Lemma \ref{lem:subgraph}(ii) follows from the proof of Lemma 3(3) of \cite{JN}.
\\

\begin{lemma}\label{lem:basicproperties}
Let $k\ge 3$ and $t\ge 2$. Let $\phi$ be an automorphism on $\cG_{k,t}$. 
\begin{enumerate}
\item[(i)] Then $\{\phi(u_i) : ~ i\in[tk] \}=\{u_i : ~ i\in[tk] \}$. Furthermore, $\phi(E)\in\cE_L$ for every $E\in\cE_L$ and $\{\phi(v_i) : ~ i\in[tk] \}=\{v_i : ~ i\in[tk] \}$.
\item[(ii)] There is a $j\in[tk]$ such that either $\phi(E_i)=E_{i+j-1}$ for every $i\in[tk]$ or $\phi(E_i)=E_{j-i+1}$ for every $i\in[tk]$, where the indexes are considered modulo $tk$.
\end{enumerate}
\end{lemma}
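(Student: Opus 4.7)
The proof is driven entirely by a degree/structure analysis; no ad hoc case work on $k$ or $t$ is needed.

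\textbf{Part (i).} First I would tabulate the degrees in $\cG_{k,t}$: each $u_i$ lies only in the L-edge $E_i$ (degree $1$), each $v_i$ lies in $E_{i-1}$ and $E_i$ (degree $2$), and each $w_{i,j}$ lies in $E_i$ and in exactly one cyclic edge from the $j$-th family (degree $2$). Hence the $u_i$'s are \emph{precisely} the degree-$1$ vertices of $\cG_{k,t}$; since any automorphism preserves degree, $\phi$ permutes $\{u_i:i\in[tk]\}$, giving the first claim. Next, an L-edge is the unique type of edge that contains a degree-$1$ vertex (cyclic edges consist entirely of $w$-vertices, which have degree $2$). Therefore $\phi(E)\in\cE_L$ for every $E\in\cE_L$, and by elimination cyclic edges map to cyclic edges. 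Finally, once we know L-edges are preserved, we can distinguish the two types of degree-$2$ vertices: $v_i$ lies in two L-edges, while $w_{i,j}$ lies in exactly one. Since $\phi$ preserves both the degree and the number of L-edges containing a vertex, $\phi$ permutes $\{v_i:i\in[tk]\}$ as well.

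\textbf{Part (ii).} Having established that $\phi$ permutes $\cE_L$, let $\sigma:[tk]\to[tk]$ be the permutation defined by $\phi(E_i)=E_{\sigma(i)}$. The key observation is that the \emph{intersection graph} of the L-edges is exactly the cycle $C_{tk}$: for distinct $i,i'\in[tk]$, the edges $E_i$ and $E_{i'}$ share a vertex if and only if $\{i,i+1\}\cap\{i',i'+1\}\neq\varnothing$, which happens precisely when $i'\equiv i\pm1\pmod{tk}$ (this is because the $u$-coordinates and each $w$-coordinate are unique to one L-edge, so the only possible shared vertices are the $v$'s). Since $\phi$ preserves vertex-sharing between edges, $\sigma$ must be an automorphism of $C_{tk}$, i.e.\ an element of the dihedral group $D_{tk}$. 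Thus there exists $c\in[tk]$ such that either $\sigma(i)=i+c$ for all $i$, or $\sigma(i)=c-i$ for all $i$ (modulo $tk$). Setting $j:=\sigma(1)$ (so $j=c+1$ in the rotation case and $j=c-1+1=c$ in the reflection case, after reindexing), these two possibilities are exactly $\phi(E_i)=E_{i+j-1}$ and $\phi(E_i)=E_{j-i+1}$, respectively.

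\textbf{Main obstacle.} The one place that requires a small but genuine check is the intersection-graph computation: one must verify that no two L-edges share a $w$-vertex and that $u_i$ is unique to $E_i$, so that the only coincidences come from the $v$-vertices, yielding exactly the cycle $C_{tk}$. Once that is in hand, the structure of $D_{tk}$ immediately forces the two stated forms of $\phi$ on $\cE_L$, and nothing more delicate is needed.
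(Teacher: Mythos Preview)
Your proof is correct and rests on the same observations as the paper's: the $u_i$'s are singled out as the degree-$1$ vertices, L-edges are then characterised as the edges containing a degree-$1$ vertex, and the $v_i$'s are recovered as the degree-$2$ vertices lying in two L-edges. For part~(ii) the paper argues slightly more concretely---it fixes $j$ with $\phi(E_1)=E_j$, reads off $\{\phi(v_1),\phi(v_2)\}=\{v_j,v_{j+1}\}$, and then iterates along the chain of $v$-vertices---whereas you package the same adjacency information as ``the intersection graph of $\cE_L$ is $C_{tk}$'' and invoke the dihedral group; these are equivalent formulations of one argument, and your intersection-graph check (only the $v$'s can be shared between two L-edges) is exactly the ingredient the paper's iteration uses implicitly.
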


We remark that the statement of Lemma \ref{lem:basicproperties}(i) is given implicitly in~\cite{JN}.
Statements similar to Lemma \ref{lem:basicproperties}(ii) appear as Lemma 3(1) in \cite{JN} and as Lemma~9(1) in~\cite{JN2}. 
Unfortunately, both versions only cover one of the two characterizations of $\phi$ accurately.

\begin{proof}[Proof of Lemma \ref{lem:basicproperties}]
Note that the $u_i$'s are exactly the vertices of degree $1$ in $\cG_{k,t}$. Observe that, since $\phi$ is an automorphism, $v$ and $\phi(v)$ have the same degree for every vertex $v$, thus $\phi(\{u_i : ~ i\in[tk] \})=\{u_i : ~ i\in[tk] \}$. This implies (i).

A consequence of (i) is that $\phi(E_1)=E_{j}$ for some $j\in[tk]$, so $\{\phi(v_1),\phi(v_2)\}=\{v_{j},v_{j+1}\}$.
If $\phi(v_1)=v_{j}$ and $\phi(v_2)=v_{j+1}$, then $\phi(E_2)=E_{j+1}$ and thus $\phi(v_3)=v_{j+2}$.  
Iteratively, we find that $\phi(E_i)=E_{i+j-1}$ for every $i\in[tk]$. 
Now suppose that $\phi(v_1)=v_{j+1}$ and $\phi(v_2)=v_{j}$. Then $\phi(E_2)=E_{j-1}$ and iteratively $\phi(E_i)=E_{j-i+1}$ for every $i\in[tk]$.
This completes the proof of (ii).
\end{proof}

\begin{lemma}\label{lem:invariant}
Let $k=4$ or $k\ge 6$ and $t\ge 2$. Let $\phi$ be an automorphism on $\cG_{k,t}$.
If $\phi(E_1)=E_1$, then $\phi$ is the identity.
\end{lemma}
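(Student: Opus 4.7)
The plan is to apply Lemma~\ref{lem:basicproperties}(ii) to $\phi$. Since $\phi(E_1) = E_1$, the parameter in that lemma is forced to be $1$, so one of the following two cases holds: (A) $\phi(E_i) = E_i$ for every $i \in [tk]$, or (B) $\phi(E_i) = E_{2-i}$ for every $i \in [tk]$, corresponding to a reflection of the cyclic sequence of L-edges. In Case~(A) I expect to show directly that $\phi$ is the identity; in Case~(B) I expect to derive a contradiction from the hypothesis $k = 4$ or $k \ge 6$.

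In Case~(A), since the L-edges are linear with $E_i \cap E_{i+1} = \{v_{i+1}\}$, preserving both edges forces $\phi(v_{i+1}) = v_{i+1}$ for every $i$. As $u_i$ is the unique degree-$1$ vertex of $E_i$, also $\phi(u_i) = u_i$. Consequently $\phi$ restricts to a permutation $\pi_i$ of $\{w_{i,1}, \ldots, w_{i,k-3}\}$ for each $i$. To pin down each $\pi_i$, I examine the cyclic edges: for $j \in [k-3]$ and $i_0 \equiv j \pmod{k}$, the image $\phi(E_{i_0,j})$ must again be a cyclic edge, forcing all second indices to agree, $\pi_{i_0}(j) = \pi_{i_0+1}(j) = \cdots = \pi_{i_0+k-1}(j) =: j^\ast$; moreover, the first indices trace out the window $\{i_0, \ldots, i_0+k-1\}$, so the image is $E_{i_0, j^\ast}$, and validity forces $j^\ast \equiv i_0 \equiv j \pmod{k}$, hence $j^\ast = j$ since both lie in $[k-3]$. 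Letting $i_0$ run through $\{j+sk : s \in [t]\}$, these windows cover $[tk]$, so $\pi_i(j) = j$ for all $i$ and $j$, and $\phi$ is the identity.

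Case~(B) is the real content. The same intersection/degree argument now yields $\phi(v_i) = v_{3-i}$, $\phi(u_i) = u_{2-i}$, and $\phi(w_{i,j}) = w_{2-i, \pi_i(j)}$ for some permutations $\pi_i$ of $[k-3]$. For a cyclic edge $E_{i_0, j}$ with $i_0 \equiv j \pmod{k}$, its image has first coordinates $\{2-i_0, 1-i_0, \ldots, 3-i_0-k\}$, which is a window of $k$ consecutive indices starting at $3-i_0-k$. For this image to be a cyclic edge, all second coordinates must equal a common $j^\ast$ and we need $j^\ast \equiv 3-i_0-k \equiv 3-j \pmod{k}$. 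The contradiction then comes from $j^\ast \in [k-3]$: when $k = 4$, taking $j = 1$ forces $j^\ast \equiv 2 \pmod{4}$, while $j^\ast \in \{1\}$; when $k \ge 6$, taking $j = 3 \in [k-3]$ forces $j^\ast \equiv 0 \pmod{k}$, which is impossible for $j^\ast \in \{1, \ldots, k-3\}$. Hence Case~(B) cannot occur.

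The main obstacle is the bookkeeping in Case~(B): one has to track how the reflective action permutes the second indices of the cyclic edges and isolate a specific $j$ whose image cannot be a valid second index. The witnesses $j = 1$ for $k = 4$ and $j = 3$ for $k \ge 6$ suffice, and they clarify why $k \in \{3, 5\}$ is genuinely excluded from the lemma: for those values the reflection does extend to an automorphism of $\cG_{k,t}$, which is precisely why Construction~\ref{constr:H3} is needed to handle them separately.
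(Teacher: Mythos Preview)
Your proof is correct and takes essentially the same route as the paper: use Lemma~\ref{lem:basicproperties}(ii) to reduce to the reflection $\phi(E_i)=E_{2-i}$, then derive a contradiction from the cyclic edges---the paper's choice of $E_{3,3}$ for $k\ge6$ and its tracking of $w_{1,1},w_{2,1}$ for $k=4$ are exactly the concrete instances of your congruence $j^\ast\equiv 3-j\pmod k$ with witnesses $j=3$ and $j=1$, respectively. You are more thorough in one respect: you verify explicitly in Case~(A) that $\phi(E_i)=E_i$ for all $i$ forces $\phi$ to fix every $w_{i,j}$, whereas the paper jumps straight from ``$\phi$ is not the identity'' to the reflection case without spelling this out.
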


A result closely related to Lemma \ref{lem:invariant} is given in Lemma 9(2) of \cite{JN2} without a proof. 
Note that Lemma 3(2) of \cite{JN} almost corresponds to our Lemma \ref{lem:invariant}, but it does not hold for $k=3$ and $k=5$, see for example the non-trivial automorphism illustrated in Figure \ref{fig:counterex5}.

\begin{proof}[Proof of Lemma \ref{lem:invariant}]
Assume that $\phi$ is not the identity. Then Lemma \ref{lem:basicproperties} provides that $\phi(E_i)=E_{2-i}$ for every $i\in[tk]$. 

If $k=4$, then Lemma \ref{lem:basicproperties}(i) implies that $\{\phi(w_{i,1}): ~ i\in[tk] \}=\{w_{i,1} : ~ i\in[tk] \}$.
Since $\phi(E_1)=E_1$ and $\phi(E_2)=E_{tk}$, we find that $\phi(w_{1,1})=w_{1,1}$ and $\phi(w_{2,1})=w_{tk,1}$, respectively.
Observe that the edge $E_{1,1}$ contains vertices $w_{1,1}$ and $w_{2,1}$. 
However, there is no edge in $\cG_{k,t}$ containing $\phi(w_{1,1})=w_{1,1}$ and $\phi(w_{2,1})=w_{tk,1}$, a contradiction.

Now suppose that $k\ge 6$. Then consider the edge $E_{3,3}=\{w_{3,3},\dots,w_{k+2,3}\}$. It intersects each of the edges $E_3,\dots,E_{k+2}$.
Because $\phi$ is an automorphism, $\phi(E_{3,3})$ has a non-empty intersection with each of the edges $\{\phi(E_3),\dots,\phi(E_{2+k})\}=\{E_{(t-1)k},\dots,E_{tk-1}\}$.
However, it is easy to see from our construction that in $\cG_{k,t}$ such an edge does not exist.
\end{proof}
\begin{figure}[h]
\centering
\includegraphics[scale=0.53]{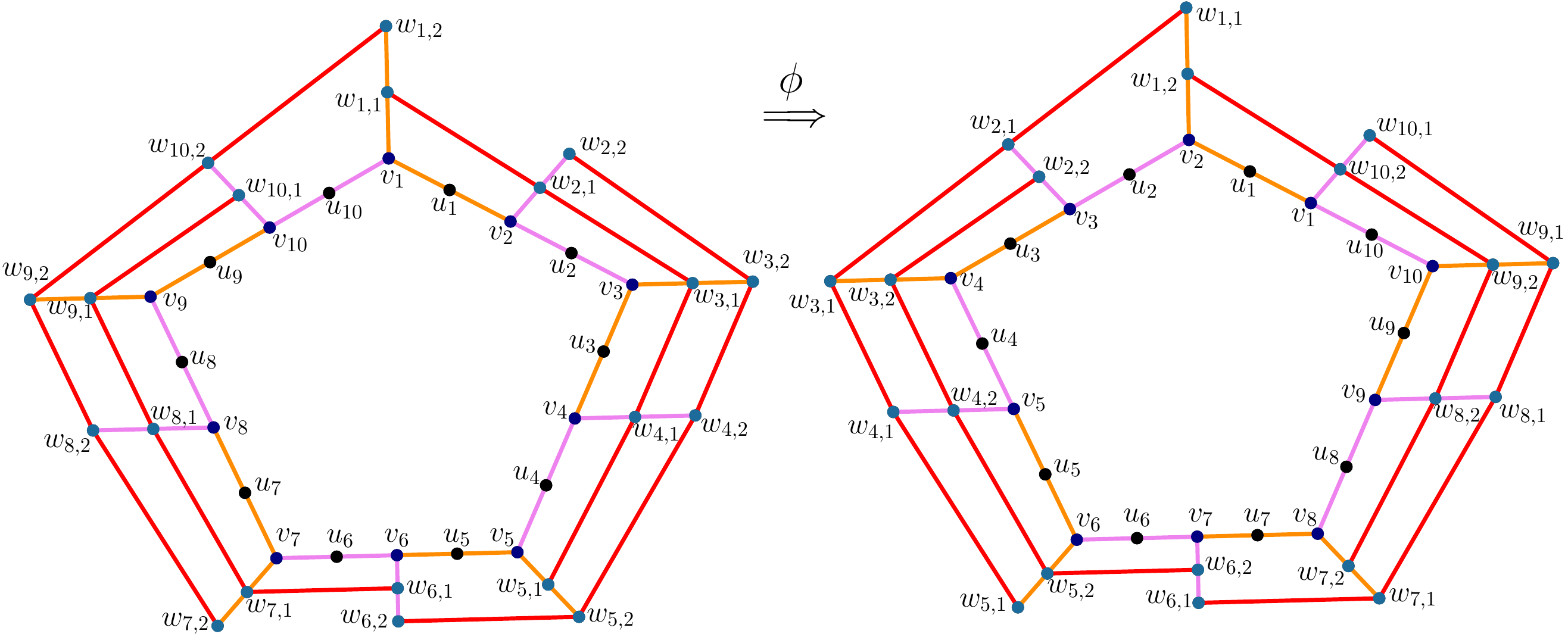}
\caption{Non-trivial automorphism $\phi$ on $\cG_{5,2}$ with $\phi(E_1)=E_1$}
\label{fig:counterex5}
\end{figure}

\subsection{Connectivity}\label{sec:connectivity}

Next we introduce a notion of \textit{connectivity} between two vertices of a $k$-graph.
In a $k$-graph $\cG$, a \textit{$v_1-v_{r+1}$-path} is an alternating sequence $(v_1,E_1,v_2,E_2,\dots,E_r,v_{r+1})$ of $r+1$ distinct vertices $v_i\in\cV(\cG)$ and $r$ distinct edges $E_i\in\cE(\cG)$ such that both $v_i$ and $v_{i+1}$ are contained in $E_i$ for any $i\in[r]$. 
Such paths are commonly known as \textit{Berge paths}.
Two $u-v$-paths are \textit{edge-disjoint} if the underlying edge sets of the paths are disjoint.
We say that $u$ and $v$ are \textit{$t$-connected} if there are $t$ pairwise edge-disjoint $u-v$-paths.
It is a simple observation that an automorphism leaves the connectivity invariant:
\begin{proposition}\label{prop:connected}
Let $\cG$ be a $k$-graph and $u,v\in\cV(\cG)$. Let $\phi$ be an automorphism on $\cG$. Then 
$u$ and $v$ are $t$-connected if and only if $\phi(u)$ and $\phi(v)$ are $t$-connected.
\end{proposition}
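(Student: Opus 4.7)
The plan is to show directly that applying $\phi$ to any collection of edge-disjoint $u-v$-paths yields an equal-size collection of edge-disjoint $\phi(u)-\phi(v)$-paths. The converse direction then follows by running the same argument with $\phi^{-1}$, which is again an automorphism on $\cG$ because $\phi$ is a bijection of $\cV(\cG)$ and hence (as an injection on the finite set $\cE(\cG)$) also induces a bijection of the edge set.

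First I would verify the elementary fact that $\phi$ sends Berge paths to Berge paths. Given a $v_1-v_{r+1}$-path $(v_1, E_1, v_2, E_2, \dots, E_r, v_{r+1})$, consider the sequence $(\phi(v_1), \phi(E_1), \phi(v_2), \dots, \phi(E_r), \phi(v_{r+1}))$. The vertices $\phi(v_1), \dots, \phi(v_{r+1})$ are pairwise distinct by injectivity of $\phi$ on $\cV(\cG)$; the edges $\phi(E_1), \dots, \phi(E_r)$ lie in $\cE(\cG)$ by definition of automorphism and are pairwise distinct because $\phi$ is injective on $\cE(\cG)$; finally, $v_i, v_{i+1}\in E_i$ directly implies $\phi(v_i), \phi(v_{i+1}) \in \phi(E_i)$. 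Hence the image sequence is a legitimate $\phi(v_1)-\phi(v_{r+1})$-path.

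Now suppose $u$ and $v$ are $t$-connected via pairwise edge-disjoint paths $P_1, \dots, P_t$. I would apply $\phi$ pointwise to each $P_s$ to obtain sequences $\phi(P_1), \dots, \phi(P_t)$; by the preceding paragraph, each $\phi(P_s)$ is a $\phi(u)-\phi(v)$-path. To establish edge-disjointness, suppose some edge $F$ appears on both $\phi(P_s)$ and $\phi(P_{s'})$ with $s\neq s'$. Then $F=\phi(E)=\phi(E')$ for edges $E$ on $P_s$ and $E'$ on $P_{s'}$, and injectivity of $\phi$ on $\cE(\cG)$ forces $E=E'$, contradicting the edge-disjointness of $P_s$ and $P_{s'}$. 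This produces $t$ edge-disjoint $\phi(u)-\phi(v)$-paths, completing the forward implication; the reverse implication is obtained identically with $\phi^{-1}$ in place of $\phi$.

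There is no real obstacle in the argument; the statement is essentially a bookkeeping exercise. The only point that deserves an explicit sentence is the observation that $\phi^{-1}$ is also an automorphism, which relies on the finiteness of $\cE(\cG)$ to promote the injection $E\mapsto \phi(E)$ on edges to a bijection.
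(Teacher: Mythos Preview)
Your argument is correct. The paper does not actually supply a proof of this proposition; it is stated immediately after the sentence ``It is a simple observation that an automorphism leaves the connectivity invariant,'' so your direct verification that $\phi$ maps Berge paths to Berge paths and preserves edge-disjointness is exactly the kind of routine check the authors are leaving to the reader. Your remark that one needs finiteness of $\cE(\cG)$ (given the paper's one-sided definition of automorphism) to ensure $\phi^{-1}$ is again an automorphism is a nice point of care, though in this context all hypergraphs are implicitly finite.
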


\begin{lemma}\label{lem:connected}
Let $k\ge 3$ and $t\ge 2$ and consider $\cG:=\cG_{k,t}$.
Let $E\in\cE(\cG)$ and let $u,v\in E$ be distinct vertices of degree $2$, i.e.\ $u,v\notin\{u_1,\dots,u_{tk}\}$.
Then $u$ and $v$ are $2$-connected.
\end{lemma}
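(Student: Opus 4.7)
My plan is to construct two edge-disjoint Berge $u-v$-paths. The first, $P_1$, is simply $u, E, v$. The second, $P_2$, will live in $\cG - E$, i.e., avoid the edge $E$. The structural feature I will exploit is that the L-edges $E_1,\dots,E_{tk}$ form a hypergraph cycle in which consecutive L-edges share exactly a $v$-vertex, while each cyclic edge $E_{i',j}$ provides alternative connections among its $k$ constituent $w_{\cdot,j}$-vertices. Removing any single edge from $\cG_{k,t}$ therefore still leaves every pair of degree-$2$ vertices in one connected component, so a suitable $P_2$ always exists; it remains only to exhibit one explicitly.

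I will split the argument into two principal cases. If $E = E_{i',j}$ is a cyclic edge, then $u = w_{a,j}$ and $v = w_{b,j}$ for distinct indices $a,b \in \{i',\dots,i'+k-1\}$, and (assuming $a<b$) I can take
\[
P_2 \;=\; w_{a,j},\, E_a,\, v_{a+1},\, E_{a+1},\, \dots,\, v_b,\, E_b,\, w_{b,j},
\]
which uses only L-edges and so is automatically disjoint from $E$. If $E = E_i$ is an L-edge, the degree-$2$ vertices in $E_i$ are $v_i, v_{i+1}$ together with the $w_{i,j}$ for $j\in[k-3]$, and I handle the sub-cases separately: for $\{u,v\}=\{v_i,v_{i+1}\}$, I let $P_2$ traverse the L-cycle the long way around as $v_i, E_{i-1}, v_{i-1}, \dots, v_{i+2}, E_{i+1}, v_{i+1}$; when one of $u,v$ is a $w$-vertex $w_{i,j}$, the path $P_2$ must enter $w_{i,j}$ through its cyclic edge $E_{i',j}$ (the only edge of $w_{i,j}$ other than $E_i$), so I pick a neighbour $w_{a,j} \in E_{i',j}$ with $a \neq i$, route $u$ to $w_{a,j}$ via $E_a$ and the L-cycle minus $E_i$, and finish with $E_{i',j}$.

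The main technical nuisance will be boundary behaviour: if $i = i'$ or $i = i'+k-1$, then one of the natural neighbours $w_{i-1,j}$, $w_{i+1,j}$ lies outside the block $\{i',\dots,i'+k-1\}$ and is hence not adjacent to $w_{i,j}$ via the cyclic edge. In those boundary cases I will choose the neighbour on the admissible side and let $P_2$ traverse almost the entire L-cycle $E_1,\dots,E_{tk}$, skipping $E_i$; this remains a valid Berge path because $tk \geq 2k \geq 6$. The sub-case $u = w_{i,j_1}$, $v = w_{i,j_2}$ with $j_1 \neq j_2$ combines two such detours, one at each endpoint, and is feasible because each cyclic edge offers $k-1 \geq 2$ admissible choices of neighbour. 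Once the paths are written down, checking pairwise distinctness of vertices and edges is then routine modular arithmetic.
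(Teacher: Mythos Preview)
Your proposal is correct and follows essentially the same approach as the paper: take $P_1=(u,E,v)$ and then exhibit a $u$--$v$ path in $\cG-E$, splitting into the cases $E\in\cE_{cyc}$ and $E\in\cE_{L}$ and exploiting that the L-edges form a cycle through all the $v_j$'s. The only difference is one of packaging: rather than writing down an explicit Berge path in the L-edge case and chasing boundary sub-cases, the paper simply argues that each of $u$ and $v$ can reach \emph{some} $v_j$ in $\cG-E_i$ (trivially if it is a $v$-vertex, or via its cyclic edge and one further L-edge if it is a $w$-vertex) and then invokes transitivity of connectedness along the L-path $E_{i+1},\dots,E_{i-1}$, which sidesteps the distinctness bookkeeping you flag as ``routine modular arithmetic.''
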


\begin{proof}
Consider $\cG'$, the $k$-graph obtained from $\cG$ by deleting the edge $E$. We shall show that $u$ and $v$ are $1$-connected in $\cG'$.
Recall that the edges set $\cE(\cG)=\cE_L\cup \cE_{cyc}$ consists of L-edges $E_i$ and cyclic edges $E_{i,j}$.

If $E\in\cE_{cyc}$, then $u$ and $v$ are contained in distinct L-edges of $\cG$, say without loss of generality $u\in E_1$ and $v\in E_j$. 
Then $(u,E_1,v_2,E_2,v_3,\dots,E_j,v)$ is a $u-v$-path in $\cG'$.

If $E\in\cE_L$, let $i$ such that $E=E_i$. Note that the L-edges of $\cG'$ form a $v_{i+1}-v_{i}$-path containing all $v_j$, $j\in[tk]$.
Therefore, if there is a $u-v_j$-path and a $v-v_{j'}$-path in $\cG'$ for any $j,j'\in[tk]$, then we also find a $u-v$-path in $\cG'$.
If $u\in\{v_1,v_2\}$, there is a trivial $u-v_i$-path. Otherwise, $u\in E'$ for some cyclic edge $E'\in\cE_{cyc}$. 
Let $w$ be an arbitrary vertex in $E'\setminus\{u\}$. Then $w$ is also contained in some L-edge $E_{j}\in \cE_L$ where $j\in[tk]$.
Then $(u,E',w,E_{j},v_{j})$ is a $u-v_{j}$-path. Similarly, we find a $v-v_{j'}$-path, which completes the proof.
\end{proof}

\subsection{Proof of the main result}\label{sec:proof}

\begin{proof}[Proof of Theorem \ref{thm:main}]
For the first part of the proof, let $k=4$ or $k\ge 6$. Let $t_i\in \N$ for $i\in[k-1]$ such that $2\le t_1<t_2<\dots<t_{k-1}$.
We shall show that $\cH:=\cH(t_1,\dots,t_{k-1})$ is minimal asymmetric. 
In order to verify that $\cH$ is asymmetric, let $\phi$ be an arbitrary automorphism on $\cH$. 
Recall that $E_0$ is an edge of $\cH$ which connects otherwise disjoint copies of $\cG_{k,t_i}$, $i\in[k-1]$.
\\

First, we show that $\phi(E_0)=E_0$.
We know that $\phi(E_0)\in\cE(\cH)$, so assume that $\phi(E_0)=E$ for some $E\neq E_0$. 
Then $E\in\cE(\cG^\ell)$ for some $\ell\in[k-1]$.
Consider two distinct vertices $u,v\in E_0\setminus\{x_0\}$. Both vertices have degree $2$ in $\cH$.
Since $\phi$ is an automorphism, $\phi(u)$ and $\phi(v)$ are distinct vertices in $E$ with degree $2$.
By Lemma \ref{lem:connected}, $\phi(u)$ and $\phi(v)$ are $2$-connected in $\cG^\ell$, so in particular $2$-connected in $\cH$.
However, in our construction the vertices $u$ and $v$ are not $2$-connected, because the only path between them is $(u,E_0,v)$.
This contradicts Proposition \ref{prop:connected}. Therefore, we conclude $\phi(E_0)=E_0$, so in particular $\phi(x_0)=x_0$.
\\

Since $\phi(E_0)=E_0$, $\phi$ is also an automorphism on the $k$-graph $\cH-E_0$, which is the disjoint union of an isolated vertex $x_0$ and $k$-graphs $\cG^\ell$. 
Recall that the $t_\ell$'s are pairwise disjoint, therefore the $\cG^\ell$'s are pairwise non-isomorphic.
This implies that $\{\phi(E): ~ E\in \cE(\cG^\ell)\} = \cE(\cG^\ell)$ for every $\ell\in[k-1]$, i.e.\ $\phi$ maps each $\cG^\ell$ to itself.

Now we show that $\phi$ is the identity, thus $\cH$ is asymmetric.
Fix an arbitrary $\ell\in[k-1]$.
Note that $\phi(u^\ell_1)\in \phi(E_0)\cap \phi(E^\ell_1)$, thus $\phi(u^\ell_1)\in E_0\cap \cV(\cG^\ell)$.
This implies that $\phi(u^\ell_1)=u^\ell_1$ and therefore $\phi(E^\ell_1)=E^\ell_1$.
Now Lemma \ref{lem:invariant} provides that $\phi$ restricted to $\cG^\ell$ is the identity. 
Since $\ell$ was chosen arbitrarily, the entire automorphism $\phi\colon \cH\to\cH$ is the identity.
We conclude that $\cH$ is asymmetric.
\\

Now let $\cH'$ be an arbitrary induced sub-hypergraph of $\cH$ with $2\le|\cV(\cH')|<|\cV(\cH)|$.
We shall show that that $\cH'$ is symmetric. We can assume that $|\cE(\cH')|\ge2$, otherwise $\cH'$ is trivally symmetric.
\\

\textbf{Case 1:} $E_0\notin \cE(\cH')$.\smallskip\\
Let $E\in\cE(\cH')$ with $E\neq E_0$, i.e.\  $E\in\cE(\cG^\ell)$ for some fixed $\ell\in[k-1]$.
Let $\cH''$ be the sub-hypergraph of $\cH'$ induced by the vertex set $\cV(\cG^\ell)$.
Because $E\in\cE(\cH'')$, $\cH''$ has at least two vertices.
Now Lemma \ref{lem:subgraph}(i) provides a non-trivial automorphism $\psi$ on $\cH''$.
We extend this automorphism to $\cH'$ as follows. Let $\phi:\cV(\cH')\to\cV(\cH')$ with $\phi(w)=\psi(w)$ for every $w\in \cV(\cH'')$ and $\phi(w)=w$ for every $w\notin \cV(\cH'')$.
Then $\phi$ is a non-trivial automorphism on $\cH'$, so $\cH'$ is symmetric.
\\

\textbf{Case 2:} $E_0\in \cE(\cH')$.\smallskip\\
If there is some $\ell\in[k-1]$ such that $E^\ell_1\notin\cE(\cH')$, then consider the function $\phi:\cV(\cH')\to\cV(\cH')$ with
$\phi(u^\ell_1)=x_0$, $\phi(x_0)=u^\ell_1$ and $\phi(w)=w$ for every $w\in \cV(\cH')\setminus\{u^\ell_1,x_0\}$. 
Observe that this is a non-trivial automorphism on $\cH'$.

If $E^{\ell'}_1\in\cE(\cH')$ for every $\ell'\in[k-1]$, fix $\ell$ such that there is a vertex $v\in\cV(\cG^\ell)\setminus\cV(\cH')$ and let $\cH''$ be the sub-hypergraph of $\cH'$ induced by $\cV(\cG^\ell)$. 
Then $2<|\cV(\cH'')|<|\cV(\cG^\ell)|$, thus Lemma \ref{lem:subgraph}(ii) yields a non-trivial automorphism $\psi$ on $\cH''$ with $\psi(u^\ell_1)=u^\ell_1$.
Similarly to Case~1, we extend $\psi$ to a non-trivial automorphism on $\cH'$. 
\\

This completes the proof for $k=4$ and $k\ge 6$. For $k=3$ and $k=5$, let $t$ and $t'$ be arbitrary integers with $2\le t< t'$.
We show that $\cH^3:=\cH^3(t,t')$ and $\cH^5:=\cH^5(t,t')$ are minimal asymmetric. The proof is similar to the argumentation presented above, so we only provide a sketch. A detailed proof is given in the first author's thesis \cite{Bohnert}.
\\

If $k=3$, let $\phi$ be an arbitrary automorphism on $\cH^3$. Then $\phi(E_0)=E_0$, and thus
$\{\phi(E): ~ E\in \cE(\cG)\} = \cE(\cG)$ as well as $\{\phi(E): ~ E\in \cE(\cG')\} = \cE(\cG')$.
Therefore, $\phi(u_1)=u_1$ and $\phi(u'_1)=u'_1$. There are six edges in which every vertex has degree $2$, namely $E_i$ and $E'_i$ for $i\in[3]$. 
It is easy to see that each of them is invariant under $\phi$, which then implies that $\phi$ is the identity.
Thus, $\cH^3$ is asymmetric. 

Now consider an induced sub-hypergraph $\cH'$ of $\cH^3$ with $2\le|\cV(\cH')|<|\cV(\cH^3)|$.
We can suppose that there is no edge which contains two vertices of degree $1$, otherwise $\cH'$ is clearly symmetric.
If $E_0\notin \cE(\cH')$, then there is a non-trivial automorphism $\phi$ on $\cH'$ with $\phi(u_2)=u_3$ and $\phi(u_3)=u_2$.
If $E_0\in \cE(\cH')$, then $E_1,E'_1\in\cE(\cH')$. Let $E\in\cE(\cH^3)\setminus \cE(\cH')$ and say that $E\in \cE(\cG)$. 
Now if $E_{y}\notin \cE(\cH')$, we can apply Lemma \ref{lem:subgraph}(ii) as in Case 2, so suppose that $E_y\in\cE(\cH')$.
Since no edge contains two vertices of degree $1$, we find that $\cE(\cH')\cap\cE(\cG)=\{E_1,E_2,E_3,E_y\}$.
Then there is an automorphism $\phi$ on $\cG$ with $\phi(E_y)=E_3$ and $\phi(E_3)=E_y$.
\\

If $k=5$, given an automorphism $\phi$ on $\cH^5$, we see that $\phi(E_0)=E_0$, thus $\{\phi(u_1),\phi(u_2)\}=\{u_1,u_2\}$.
If $\phi(u_1)=u_2$ and $\phi(u_2)=u_1$, then $\phi(E_1)=E_{2}$, $\phi(E_2)=E_1$ and $\phi(E_3)=E_{tk}$. Note that the edge $E_{1,1}$ intersects each of $E_1,E_2$ and $E_3$,
but there does not exist an edge in $\cE(\cH^5)$ which intersects $\phi(E_1)=E_{2}, \phi(E_2)=E_1, \phi(E_3)=E_{tk}$, a contradiction.
Thus $\phi(u_1)=u_1$ and $\phi(u_2)=u_2$, and similarly, $\phi(u'_1)=u'_1$ and $\phi(u'_2)=u'_2$.
By Lemma \ref{lem:basicproperties}(ii), $\phi$ is the identity, so $\cH^5$ is asymmetric. 

Let $\cH'$ be an induced sub-hypergraph of $\cH^5$ such that $2\le|\cV(\cH')|<|\cV(\cH^5)|$.
If $E_0\notin \cE(\cH')$, we proceed as in Case 1.
Otherwise, we can suppose that $E_1,E_2,E'_1,E'_2\in\cE(\cH')$ by a similar argument as in Case 2. 
Then a variant of Lemma \ref{lem:subgraph}(ii), see Lemma 2.9(i) of \cite{Bohnert}, provides a non-trivial automorphism on $\cH'$.
\end{proof}

\section{Regular asymmetric $k$-graphs}\label{sec:regular}

In order to prove Theorem \ref{thm:regular}, we need a result by Izbicki \cite{Izbicki}.

\begin{theorem}[Izbicki \cite{Izbicki}]\label{thm:izbicki}
For every $k\ge 3$, there exist infinitely many $k$-regular asymmetric $2$-graphs.
\end{theorem}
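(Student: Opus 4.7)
My plan is constructive: for each fixed $k \ge 3$, I would exhibit an explicit infinite sequence of pairwise non-isomorphic $k$-regular asymmetric graphs. The argument naturally splits into three parts. First, I would establish a base case by producing one $k$-regular asymmetric graph $F_k$; for $k = 3$ the classical Frucht graph works, and for larger $k$ one can take an ad hoc small example, for instance a $k$-regular graph obtained from a known asymmetric graph by locally equalising degrees with rigid gadgets.

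Second, to generate infinitely many examples from a single one, I would use the following surgery. I would choose a sequence $H_1, H_2, \ldots$ of pairwise non-isomorphic $k$-regular graphs (for example, iterated modifications of $F_k$), each containing a distinguished edge $e_i = u_i v_i$ such that no automorphism of $H_i$ swaps $u_i$ and $v_i$ — this ``edge-rigidity'' is automatic if $H_i$ is itself asymmetric. I would concatenate $n$ chosen blocks into a composite graph $G_n$ by the following operation: in two successive blocks $H_{i_j}$ and $H_{i_{j+1}}$, delete the distinguished edges $uv$ and $u'v'$ respectively and insert $uu'$ and $vv'$. This operation preserves every vertex degree, so $G_n$ is $k$-regular, and $|V(G_n)|$ grows linearly with $n$, yielding infinitely many graphs.

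Third and most important, I would verify that each $G_n$ is asymmetric. The splice-edges should be recognisable from the graph structure alone; one can arrange the blocks so that the splice-edges are, say, the unique edges whose removal disconnects $G_n$ into two components with prescribed orders, or the unique edges avoiding short cycles of a certain length. Then any automorphism of $G_n$ permutes the blocks among themselves; since the blocks are pairwise non-isomorphic, this permutation is the identity and each block is mapped to itself, with the splice-edge fixed as a set. Edge-rigidity of each block then forces the restriction to each block to be the identity, and therefore the automorphism of $G_n$ is the identity.

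The main obstacle I foresee is the simultaneous task of producing, for every $k \ge 3$, an inexhaustible supply of pairwise non-isomorphic edge-rigid $k$-regular blocks and designing the surgery so that the splice-edges are canonically identifiable without introducing accidental symmetries at the gluing sites. If this surgery proves delicate, a much cleaner alternative is a probabilistic route: one may invoke the theorem (due to Bollob\'as and Wormald) that a uniformly random $k$-regular graph on $n$ vertices is asymmetric with probability tending to $1$ as $n \to \infty$, which directly supplies arbitrarily large asymmetric $k$-regular graphs and hence infinitely many.
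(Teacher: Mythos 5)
This theorem is not proved in the paper at all: it is imported verbatim from Izbicki's 1960 paper and used as a black box in the proof of Theorem~\ref{thm:regular}. So there is no in-paper argument to compare against; your proposal has to stand on its own, and as written it has real gaps in the constructive route, though your probabilistic fallback is sound.

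The gaps in the constructive route are these. First, the base case for general $k$ is asserted, not established: ``an ad hoc small example obtained by locally equalising degrees with rigid gadgets'' is exactly the nontrivial content of Izbicki-type constructions, and you cannot take it for granted. Second, your supply of blocks is either circular or unresolved: if the blocks $H_1,H_2,\dots$ are already pairwise non-isomorphic, $k$-regular and asymmetric, the theorem is proved before any surgery happens, and if they are not, you have not said where infinitely many pairwise non-isomorphic edge-rigid $k$-regular blocks come from --- you name this as the main obstacle yourself, which means the proof is not closed. (A standard repair is to use a single edge-rigid block, chain $n$ copies, and break the chain's reflection symmetry with one distinguished end-block; note also that interior blocks of a chain need \emph{two} distinguished edges each, not one.) Third, your proposed canonical recognition of splice edges fails as stated: after splicing, the two new edges $uu'$ and $vv'$ form a $2$-edge-cut, so neither is individually a cut edge, and for even $k$ a $k$-regular graph has no cut edges at all by a parity count; you would need to recognise the splice \emph{pairs} as the canonical $2$-edge-cuts separating the blocks, and then rule out an automorphism that fixes a block setwise while swapping the two attachment vertices (your edge-rigidity hypothesis does handle that last point). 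By contrast, the fallback via the Bollob\'as--McKay--Wormald theorem that a random $k$-regular graph on $n$ vertices is asymmetric a.a.s.\ for fixed $k\ge 3$ does immediately yield arbitrarily large, hence infinitely many, asymmetric $k$-regular graphs; that one paragraph is a complete (if historically anachronistic and far less elementary) proof, whereas the surgery as described is not.
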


Given a $k$-regular $2$-graph $G=(\cV,\cE)$, the \textit{(hypergraph) dual} of $G$ is the $k$-graph $\cH=(\cE,\{A(v): ~ v\in \cV\})$
where $A(v)=\{E\in \cE: v\in E\}$ is the \textit{adjacency set} of $v$.
Note that $|A(v)|=k$ for every $v$, thus $\cH$ is a well-defined $k$-graph. An example for a hypergraph dual is provided in Figure \ref{fig:dual}.
Observe that adjacency sets are unique in regular $2$-graphs:

\begin{proposition}\label{prop:adjacency}
Let $G$ be an $r$-regular $2$-graph, $r\ge2$. Let $u,v\in\cV(G)$ be two distinct vertices of $G$. Then $A(u)\neq A(v)$.
\end{proposition}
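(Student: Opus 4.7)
The plan is to argue by contradiction. Suppose $u\neq v$ but $A(u)=A(v)$. Since $r\ge 2$, the common adjacency set $A(u)$ contains at least two distinct edges of $G$.

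Pick any edge $E\in A(u)=A(v)$. By definition of the adjacency set, both $u\in E$ and $v\in E$. But $G$ is a $2$-graph, so $|E|=2$, which forces $E=\{u,v\}$. Since $E$ was an arbitrary element of $A(u)$, every edge in $A(u)$ equals $\{u,v\}$.

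Recalling from the paper's definition that $\cE(G)$ is a \emph{set} of $2$-element subsets of $\cV(G)$ (no parallel edges), the set $A(u)$ can contain at most one edge, namely $\{u,v\}$. This contradicts $|A(u)|=r\ge 2$, completing the argument.

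The proof is essentially immediate once one unpacks the definitions; the only thing that requires a moment of care is observing that the $k$-graph framework used in the paper does not allow multi-edges, so repeated occurrences of $\{u,v\}$ collapse to a single element of $A(u)$. There is no real obstacle here beyond keeping the bookkeeping straight.
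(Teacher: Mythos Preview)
Your argument is correct. The paper does not actually supply a proof of this proposition; it is introduced as an observation (``Observe that adjacency sets are unique in regular $2$-graphs'') and left to the reader. Your contradiction argument is exactly the natural one-line justification the reader is expected to fill in.
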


\begin{figure}[h]
\centering
\includegraphics[scale=0.33]{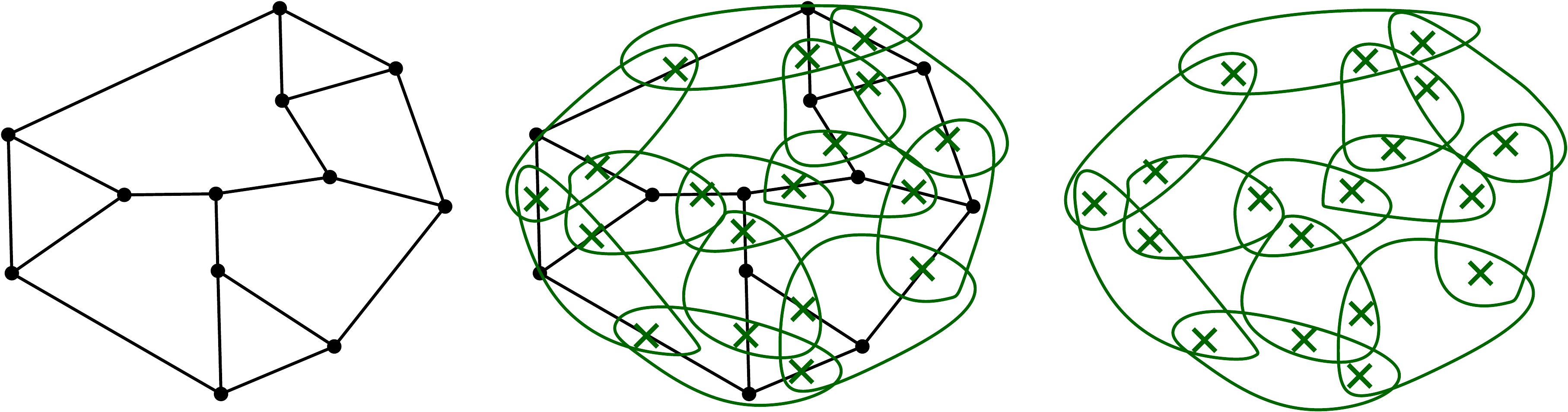}
\caption{The $3$-regular Frucht graph (left) and its hypergraph dual (right).}
\label{fig:dual}
\end{figure}

\begin{lemma}\label{lem:edgeasym}
Let $G$ be an $r$-regular asymmetric graph for some $r\ge3$. Then the hypergraph dual of $G$ is also asymmetric.
\end{lemma}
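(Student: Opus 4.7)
The plan is to argue by contradiction: assuming the hypergraph dual $\cH = (\cE(G), \{A(v) : v \in \cV(G)\})$ admits a non-trivial automorphism $\Phi$, I would build from $\Phi$ a non-trivial automorphism of $G$, contradicting asymmetry.

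First I would unpack what $\Phi$ does. As an automorphism of $\cH$, $\Phi$ is a bijection on $\cE(G)$ (the vertices of $\cH$) that sends each edge of $\cH$ to an edge of $\cH$. Since the edges of $\cH$ are precisely the adjacency sets $A(v)$, Proposition \ref{prop:adjacency} guarantees that these sets are pairwise distinct and hence in canonical bijection with $\cV(G)$. Consequently there is a well-defined map $\phi : \cV(G) \to \cV(G)$ determined by $\Phi(A(v)) = A(\phi(v))$, and $\phi$ is a permutation because $\Phi$ permutes the edges of $\cH$.

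Next I would verify that $\phi$ is an automorphism of $G$. For any edge $e = \{u,v\} \in \cE(G)$, the element $\Phi(e)$ lies in both $\Phi(A(u)) = A(\phi(u))$ and $\Phi(A(v)) = A(\phi(v))$, so $\Phi(e)$ is an edge of $G$ incident to both $\phi(u)$ and $\phi(v)$. Since $\phi$ is a bijection and $u \neq v$, the endpoints $\phi(u)$ and $\phi(v)$ are distinct, and simplicity of $G$ then forces $\Phi(e) = \{\phi(u),\phi(v)\}$; in particular this pair belongs to $\cE(G)$. Running the same reasoning with $\Phi^{-1}$ shows $\phi^{-1}$ preserves edges as well, so $\phi$ is a graph automorphism.

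Because $G$ is asymmetric, $\phi$ must be the identity, so $\Phi$ fixes every adjacency set. But then for any $e = \{u,v\} \in \cE(G)$, simplicity of $G$ gives $A(u) \cap A(v) = \{e\}$, and $\Phi(e) \in A(u) \cap A(v)$ forces $\Phi(e) = e$. Hence $\Phi$ is the identity, contradicting non-triviality, and $\cH$ is asymmetric. The only subtle step in this plan is the construction of $\phi$: it rests entirely on Proposition \ref{prop:adjacency}, without which $\phi$ is not even well defined. The hypothesis $r \ge 3$ plays no active role in the argument itself; it merely ensures that $\cH$ is a genuine $k$-graph with $k\ge 3$ (and incidentally excludes $r=2$, for which $G$ could not be asymmetric in the first place).
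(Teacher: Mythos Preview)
Your proof is correct and follows essentially the same route as the paper: use Proposition~\ref{prop:adjacency} to turn the hypergraph automorphism $\Phi$ into a well-defined permutation $\phi$ of $\cV(G)$ via $\Phi(A(v))=A(\phi(v))$, verify that $\phi$ is a graph automorphism by checking $\Phi(\{u,v\})=\{\phi(u),\phi(v)\}$, and then use asymmetry of $G$ to conclude that both $\phi$ and $\Phi$ are the identity. Your added remarks (explicitly invoking simplicity of $G$, noting the role of $r\ge 3$) are accurate but do not change the argument.
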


\begin{proof}Let $G=(\cV,\cE)$. 
Let $\phi_\cH$ be an arbitrary automorphism on the dual $\cH$ of $G$. 
By the definition of a dual we know that $\phi_\cH\colon \cE\to\cE$ is a bijection such that for any $v\in\cV$, 
the edges in its adjacency set $A(v)$ are mapped to $\big\{\phi_\cH(E): ~ E\in A(v)\big\}=A(w_v)$ for some vertex $w_v\in\cV$.
By Proposition \ref{prop:adjacency}, $w_v$ is uniquely determined. 
\\

We define the function $\phi_G\colon \cV\to\cV$, $\phi_G(v)=w_v$. Observe that $\phi_G$ is a bijection. Now we show that $\phi_G$ is an automorphism on $G$.
Consider an arbitrary edge $E_{uv}=\{u,v\}\in\cE$.
Then $E_{uv}\in A(v)$, thus $\phi_\cH( E_{uv})\in \big\{\phi_\cH(E): ~ E\in A(v)\big\}=A(w_v)=A(\phi_G(v))$.
Similarly, we obtain $\phi_\cH( E_{uv})\in A(\phi_G(u))$.
Therefore, 
\begin{equation}
\phi_\cH( E_{uv})=\{\phi_G(u),\phi_G(v)\}.
\end{equation}
This implies that $\{\phi_G(u),\phi_G(v)\}\in\cE$, so $\phi_G$ is an automorphism on $G$. 
Since $G$ is asymmetric, $\phi_G$ is the identity. 
Then for every edge $E_{uv}=\{u,v\}\in\cE$, (1) implies that $\phi_\cH( E_{uv})=\{u,v\}=E_{uv}$, i.e.\ $\phi_\cH$ is the identity. 
Consequently, $\cH$ is asymmetric. 
\end{proof}

\begin{proof}[Proof of Theorem \ref{thm:regular}]
Let $k\ge 3$. Let $G_1,G_2,\dots$ be an infinite family of pairwise distinct $k$-regular, asymmetric graphs as provided by Theorem \ref{thm:izbicki}.
Let $\cH_i$ be the hypergraph dual of $G_i$, $i\in\N$. Observe that the $\cH_i$'s are pairwise distinct $2$-regular $k$-graphs. 
Lemma \ref{lem:edgeasym} provides that the $\cH_i$'s are asymmetric.
\end{proof}

\textbf{Acknowledgements:} The research of the second author was partially supported by DFG grant FKZ AX 93/2-1. The authors thank Maria Axenovich and Torsten Ueckerdt for helpful discussions as well as Felix Clemen for comments on the manuscript.

\end{document}